\title{Closed Aspherical Manifolds with Center \\ {\small (to Frank Raymond, with friendship and admiration upon his 80th birthday)}}
\author{Sylvain Cappell\thanks{Research was partially supported by a DARPA grant} \\
Courant Institute, New York University \\
\\
Shmuel Weinberger\thanks{Research was partially supported by an NSF grant} \\
University of Chicago \\
\\
Min Yan\thanks{Research was supported by Hong Kong Research Grant Council General Research Fund 604408 and 605610} \\ 
Hong Kong University of Science and Technology}
\newcommand{\sub}{\subset}
\newcommand{\pa}{\partial}
\newcommand{\bb}{\mathbb}
\newtheorem{theorem}{Theorem}
\newtheorem*{theorem*}{Theorem}
\newtheorem*{conclusion*}{Conclusion}
\newtheorem*{proposition*}{Proposition}
\newtheorem*{corollary*}{Corollary}
\newtheorem*{definition*}{Definition}
\newtheorem*{lemma*}{Lemma}
\theoremstyle{definition}
\newtheorem{remark}[theorem]{Remark}
\newtheorem*{remark*}{Remark}
\begin{document}
\maketitle

\begin{abstract}
We show that in all dimensions $> 7$ there are closed aspherical manifolds whose fundamental groups have nontrivial center but do not possess any topological circle actions.  This disproves a conjectured converse (proposed by Conner and Raymond) to a classical theorem of Borel.
\end{abstract}

\section{Introduction}

Aspherical manifolds are manifolds with contractible universal cover. Such manifolds play significant roles in mathematics. Their homotopy types are determined by the fundamental groups. Important rigidity conjectures state more strongly that the geometry of such spaces is specified by their fundamental group. For example, the Borel conjecture states that the homeomorphism type of a closed aspherical manifold is determined by the fundamental group. Progress on such rigidity conjectures has long been a focus of research efforts.

Here we consider another kind of conjectured rigidity of aspherical manifolds, about the existence of effective actions of positive dimensional compact Lie groups. A theorem of Borel \cite{borel} asserts that, if the fundamental group of a closed aspherical manifold is centerless, then there is no such action, and the finite groups that act effectively must act as outer automorphisms of the fundamental group. In fact, Borel's analysis shows that for a circle action on a closed aspherical manifold, the orbit of any point is central in the fundamental group, and the inclusion map is an injection on fundamental group. Then Conner and Raymond proved in \cite{cr1} that only toral groups, among the connected Lie groups, can act effectively on closed aspherical manifolds, and the centers of their fundamental groups must contain an injective image of the fundamental group of the torus. They further raised the question \cite[page 229]{cr1} whether a converse to Borel's theorem holds: If the fundamental group of a closed aspherical manifold has nontrivial center, then the manifold has a circle action, such that the orbit circle is a nontrivial central element of the fundamental group. This conjecture was stated again recently in \cite{lr1}. By the works of \cite{cj, gabai}, the conjecture is true in dimension $3$.

In the decades since this speculation arose, many of the conjectures about aspherical manifolds from that period were disproved. Essentially, the only remaining one is the Borel conjecture that asserts a form of topological rigidity for aspherical manifolds; see \cite{bl, farrell} for some of the more recent positive results on these problems.

In a sense, the Conner-Raymond conjecture can be regarded as a consequence of a very strong form of the Borel conjecture. In fact, by obstruction theory, it is easy to see that any central element of the fundamental group of an aspherical manifold can be realized by a map of the circle into the space of self homotopy equivalences. See \cite{gott}. What it asked for is that this circle family of self homotopy equivalences be realized by a circle action. This turns out to be too much, and the conjecture is false.

The counterexamples to most of the old conjectures stem from essentially two different constructions of aspherical manifolds. The first was the ``reflection group trick'' of  Michael Davis \cite{davis1} which yielded the first aspherical manifolds whose universal covers are not Euclidean spaces. Also, it was used to construct aspherical manifolds which have no differentiable structure, and even ones which are not triangulable. Moreover, this method produced examples whose fundamental groups are not residually finite, and even ones which have unsolvable word problem. See \cite{davis2} for a survey and references. 

The second construction was Gromov's idea of hyperbolization  \cite{gromov}, developed in \cite{chd, dj, djw, paulin}. It implies that aspherical manifolds exist in abundance. For instance, any compact PL manifold is the image of an aspherical manifold by a degree one tangential map, and any cobordism class can be represented by an aspherical manifold.

In both constructions, the fundamental groups of the aspherical manifolds are centerless. Interestingly, Lee and Raymond \cite{lr2} showed that if the fundamental group of an aspherical manifold has nontrivial center, or more generally contains a nontrivial abelian normal subgroup, then the universal cover is homeomorphic to an Euclidean space. This is rather uncommon in the setting of Davis constructions. 

In this note, motivated by the work of Conner and Raymond, we will give some counterexamples for which the fundamental group has nontrivial center. Ultimately the result is a combination of codimension one splitting obstructions and related surgery methods \cite{ca1, ca2, ca3}, a construction of non-arithmetic hyperbolic manifolds \cite{gp}, a version of hyperbolization \cite{djw}, a construction for the Nielson realization problem \cite{bw}, the work of Borel, Conner and Raymond applying Smith theory to group actions on compact aspherical manifolds \cite{cr1,cr2, cr3, cr4}, and homology manifolds \cite{bfmw} (as the theory in \cite{cr2} does not guarantee that the reduction to a Nielsen type problem will necessarily be on a manifold). We remark that some of these ingredients had been combined in \cite{bw} but here must be modified to deal with homology manifolds; as a technical aside, we point out, following a suggestion of James Davis, how to use \cite{cod} to simplify \cite{bw}. 

To get examples in all sufficiently large dimensions, it is necessary to introduce some of the first steps of a theory of topological $G$-surgery up to ``pseudoequivalence''. See \cite{petrie} for an influential early treatment in the smooth category. There are a number of interesting new issues in the topological category, and this theory will have further applications to transformation groups, but here we develop just enough for the present applications.

\begin{theorem*}
In dimension $6$ and all dimensions $\ge 8$, there are closed aspherical manifolds with fundamental groups each having ${\bb Z}$ as center, yet do not possess nontrivial topological circle actions.
\end{theorem*}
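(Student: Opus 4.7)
The plan is to produce a closed aspherical manifold $M^n$ whose fundamental group $\Gamma$ sits in a central extension $1 \to \mathbb{Z} \to \Gamma \to Q \to 1$, and to furnish an obstruction that rules out every effective topological $S^1$-action on $M$. The obstruction will be a codimension-one Cappell $\mathrm{UNil}$ class, built into $M$ in such a way that any such action would force its vanishing via a Nielsen-type realization problem on an aspherical quotient, while homology-manifold surgery in the sense of Bryant-Ferry-Mio-Weinberger is used to ensure that the obstruction persists even when the Conner-Raymond quotient is only a homology manifold.

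For the construction I would start, in some fixed small dimension, with a closed non-arithmetic hyperbolic manifold containing a nonseparating codimension-one totally geodesic hypersurface, as produced by Gromov-Piatetski-Shapiro. Applying the relative hyperbolization of Davis-Januszkiewicz-Weinberger to a polyhedron built from this pair yields an aspherical closed manifold $X$ together with a distinguished aspherical codimension-one submanifold along which a prescribed nontrivial element of the relevant $\mathrm{UNil}$ group is supported. A central $S^1$-extension or Seifert-like fibration over $X$ then produces the candidate $M$, whose fundamental group has center exactly $\mathbb{Z}$ and which inherits the codimension-one splitting obstruction from $X$.

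To obstruct $S^1$-actions, I would suppose for contradiction that one exists. By Borel's theorem the orbit maps are $\pi_1$-injective into the center, so after passing to a finite cover the action becomes free and $M/S^1$ is aspherical. The Conner-Raymond Smith-theoretic arguments \cite{cr1,cr2,cr3,cr4}, upgraded to the topological category by the $G$-surgery up to pseudoequivalence developed later in this paper, identify this quotient as an aspherical ANR homology manifold whose homotopy type is determined by $Q$. The existence of the action is thereby reduced to realizing a particular outer automorphism class of $\Gamma$ by a self-homeomorphism of this quotient -- a Nielsen-type problem on a homology manifold.

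The main obstacle is precisely this Nielsen realization problem, and this is where the codimension-one splitting obstructions \cite{ca1,ca2,ca3}, the construction of \cite{bw} (simplified via \cite{cod}), and the BFMW theory \cite{bfmw} come together. Any realizing homeomorphism would, after being pulled back to $M$, yield a homotopy equivariant splitting along the preimage of the hyperbolized codimension-one submanifold, and such a splitting would trivialize the pre-selected nontrivial $\mathrm{UNil}$ class, a contradiction. The role of BFMW homology manifold surgery is crucial here because the quotient need not be a manifold, and one must verify that the obstruction is not artificially killed by the passage to the homology-manifold category. Finally, to spread the construction across every sufficiently large dimension one invokes the equivariant pseudoequivalence surgery theory introduced in the paper, which allows the base example to be shifted while preserving both the central $\mathbb{Z}$ and the $\mathrm{UNil}$-based obstruction.
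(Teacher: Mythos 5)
Your proposal assembles the right ingredients and has the right broad shape — reduce the non-existence of $S^1$-actions, via Borel and Conner--Raymond, to a Nielsen realization problem on the homology-manifold quotient, and obstruct that via a codimension-one $\mathrm{UNil}$ class with BFMW surgery to handle the homology-manifold issue. But the actual construction you sketch does not close the argument, for two concrete reasons.

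First, you never explain where the $\mathrm{UNil}$ class comes from. Cappell's nontrivial $\mathrm{UNil}$ groups live over the infinite dihedral group $\mathbb{Z}_2 * \mathbb{Z}_2$ (and $\mathbb{Z}^d \times (\mathbb{Z}_2 * \mathbb{Z}_2)$). A nonseparating totally geodesic hypersurface in a hyperbolic manifold yields an HNN decomposition with torsion-free vertex and edge groups, over which $\mathrm{UNil}$ vanishes. The paper's source of the $\mathrm{UNil}$ element is entirely different: it is the orbifold fundamental group $\mathbb{Z}^n \rtimes \mathbb{Z}_2$ of a torus with involution, which contains $\mathbb{Z}^d \times (\mathbb{Z}_2 * \mathbb{Z}_2)$ as a retract, and the nontrivial $L$-group element manifests as an \emph{exotic involution} on $T^n$. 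The paper's reference to Gromov--Piatetski-Shapiro is only an analogy for the gluing step, not the source of the obstruction. Without the $\mathbb{Z}_2$ entering at this stage, your $X$ carries no $\mathrm{UNil}$ class.

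Second, ``a central $S^1$-extension or Seifert-like fibration over $X$'' is not the right model and would likely be a non-counterexample. A genuine $S^1$-bundle over $X$ tautologically has a circle action. The paper's candidate is the mapping torus $T(h)$ of a homeomorphism $h\colon V\to V$ with $h^2 \simeq \mathrm{id}$ but $h$ not homotopic to an involution (this $V$ is glued from two aspherical $\mathbb{Z}_2$-manifolds bounding the standard and exotic torus involutions, via relative hyperbolization of a normal cobordism). The center of $\pi_1 T(h)$ is generated by $t^2$, not $t$, and $T(h)$ is a circle object only after the double cover $T(h^2) \simeq V \times S^1$. The mechanism ruling out circle actions is correspondingly specific: any $S^1$-action lifts to a free $S^1$-action on $T(h^2)$; the quotient $T(h^2)/S^1$ is a homology manifold homotopy equivalent to $V$; the deck transformation descends to a genuine involution on this quotient inducing the outer automorphism $h_*$; and this contradicts the (homology-manifold upgraded) Block--Weinberger Nielsen counterexample. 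Your outline has the broad strokes of the last step but asserts rather than exhibits the outer automorphism that is to be obstructed, and without the $T'$/$h$/mapping-torus structure there is no such automorphism in play. Spreading to higher dimensions is also not done by ``shifting the base example'' via pseudoequivalence surgery; rather one crosses $T(h)$ with a hyperbolic manifold $H$ and uses the pseudoequivalence $L^p$-obstruction, together with the Novikov conjecture via bounded $L$-theory descent, to show the $\mathrm{UNil}$ class survives the larger fixed-set dimensions that crossing with $H$ introduces.
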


We believe there are examples in all dimensions $\ge 5$ but can only speculate about dimension $4$.  

It is worth noting that counterexamples to the differentiable analogue, just as for the Borel conjecture, are easy consequences of the existence of exotic spheres: for an exotic sphere $\Sigma^n$, the connected sum, $T^n\# \Sigma^n$  does not have any effective smooth circle action.

The examples of the theorem are mapping tori of homeomorphisms of closed, aspherical manifolds, $h\colon V\to V$, such that even though $h^2=h\circ h$ is homotopic to the identity, $h$ is not homotopic to an involution. To get from our initial examples to ones in all dimensions $\ge 8$, we take products of our initial examples with some hyperbolic manifolds. The manifolds $V$ and the homotopy classes of $h$ in the initial examples were constructed for a different application in \cite{bw}, although the fact that the homotopy classes contain homeomorphisms was not noted in that paper. The infinite cyclic center is generated by the square of the stable letter of the HNN-extension describing the fundamental group of the mapping torus.

\medskip

\noindent{\bf Aknowledgement} We would like to express our gratitude to Frank Raymond, who provided valuable comments and suggestions to an earlier draft of the paper.

\section{Construction}

We divide the proof into several small steps, some of which are already known to experts but included for the sake of readability.

\bigskip

\noindent{\bf Step 1}. Equivariant non-rigidity of ${\bb Z}_2$-tori.

\bigskip

Borel conjectured that closed aspherical manifolds are topologically rigid. The equivariant version of this conjecture asserts that closed aspherical manifolds with suitable finite or compact group actions are also rigid. However, the equivariant Borel conjecture is false. The simplest counterexamples are involutions on tori, which we will use in our construction. Very recently, Connolly, Davis and Kahn \cite{codk} gave a very detailed and complete analysis of the equivariant structure set of such involutions.

For closed manifolds in $\dim\ge 5$, or $\dim=4$ and nice fundamental group, the surgery exact sequence
\[
\to L_{n+1}(\pi,\omega) \to S(M) \to H_n(M;{\bb L}) \to L_n(\pi,\omega)
\]
is an exact sequence of abelian groups that computes the $s$-cobordism class $S(M)$ of homology manifolds simple homotopy equivalent to $M$. The surgery obstruction groups $L_n$ are defined in \cite{wall2} and depend on the dimension $n$, the fundamental group $\pi$, and the orientation character $\omega\colon \pi\to\{\pm 1\}$. The homology $H_n$ is the generalized homology theory associated to the (simply connected) surgery spectrum ${\bb L}$. For noncompact manifolds, the exact sequence (where the homology is the ordinary one) computes the structures that have compact support, i.e., for which the homotopy equivalences are homeomorphisms (or CE-maps) outside of some compact subsets. If the surgery obstruction group is replaced by a relative surgery obstruction group that takes into account the fundamental group at infinity, and the homology is the locally finite homology, then the exact sequence computes the proper structures.

We will not be concerned with the usual $s,h$ decorations in the surgery exact sequence which reflect various algebraic $K$-theoretic refinements, as all the discrete groups we are considering will have vanishing Whitehead and reduced projective class groups, so that there is no difference among these decorations.

Let $T$ be the torus $T^n$ with standard involution, such that the fixed set has dimension either $0$ or $1$. In the surgery exact sequence, we take $M$ to be (a compactification of) the complement of the fixed point set of $T$.  Thus $\pi$ is the orbifold fundamental group of the involution, and is a semidirect product ${\bb Z}^n\rtimes {\bb Z}_2$ according to the representation of ${\bb Z}_2$ on $H_1(T)$. This representation is either the multiplication by $-1$ (when the fixed set is discrete) or has a one dimensional trivial summand (when the fixed set has dimension $1$).

The semidirect product $\pi={\bb Z}^n\rtimes {\bb Z}_2$ has ${\bb Z}^{d+1}\rtimes {\bb Z}_2\cong {\bb Z}^d\times({\bb Z}_2 * {\bb Z}_2)$ as a direct summand, where $d$ is the dimension of the fixed set, and ${\bb Z}_2$ acts trivially on the ${\bb Z}^d$ part of ${\bb Z}^{d+1}$. The surgery obstruction group of the direct summand is given as follows.

\begin{theorem*}[Banagl and Ranicki \cite{br}, Connolly and Davis \cite{cod}]
The surgery obstruction group $L_r({\bb Z}_2 * {\bb Z}_2, \omega\oplus\omega)$ is infinitely generated if and only if either $\omega$ is trivial and $r = 2$ or $3$ mod $4$, or $\omega$ nontrivial and $r = 0$ or $1$ mod $4$. Moreover, crossing with a circle produces infinitely generated subgroups in $L_{r+1}({\bb Z}\times({\bb Z}_2 * {\bb Z}_2), \omega\oplus\omega)$ in the same cases. 
\end{theorem*}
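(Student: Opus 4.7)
The plan is to exploit the amalgamated free product structure ${\bb Z}_2*{\bb Z}_2={\bb Z}_2*_1{\bb Z}_2$, amalgamation being over the trivial subgroup, and to invoke Cappell's splitting theorem \cite{ca1,ca2} for the $L$-theory of such amalgams. This yields a Mayer--Vietoris sequence
\[
\cdots \to L_r(1) \to L_r({\bb Z}_2,\omega)\oplus L_r({\bb Z}_2,\omega) \to L_r({\bb Z}_2*{\bb Z}_2,\omega\oplus\omega) \to \mathrm{UNil}_r({\bb Z};{\bb Z},{\bb Z}) \to \cdots
\]
in which $\mathrm{UNil}$ carries the duality twist inherited from $\omega$, and Cappell moreover proves that this $\mathrm{UNil}$ term is a direct summand of $L_r({\bb Z}_2*{\bb Z}_2,\omega\oplus\omega)$. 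Since the Wall groups $L_r(1)$ and $L_r({\bb Z}_2,\omega)$ are finite in every degree, infinite generation of $L_r({\bb Z}_2*{\bb Z}_2,\omega\oplus\omega)$ is equivalent to infinite generation of the appropriate orientation-twisted $\mathrm{UNil}_r({\bb Z};{\bb Z},{\bb Z})$.

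The second step is to import the $\mathrm{UNil}$ computations of Banagl--Ranicki \cite{br} and Connolly--Davis \cite{cod}. These authors produce, by an explicit construction in terms of quadratic linking forms with nilpotent structure, infinitely many linearly independent classes detected by Arf-type $\bb Z_2$-valued invariants in the parities where the duality has the correct sign, and they show that $\mathrm{UNil}$ is finite in the complementary parities. The effect of the orientation twist $\omega$ on the underlying duality is to interchange symmetric and skew-symmetric forms, which shifts the infinite-generation pattern by two mod four between the $\omega$ trivial and $\omega$ nontrivial cases, giving exactly the stated dimensions $r\equiv 2,3\pmod 4$ versus $r\equiv 0,1\pmod 4$. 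The main technical obstacle is entirely in this explicit construction and detection of infinitely many independent classes in $\mathrm{UNil}$; everything that precedes and follows is formal.

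For the final assertion, the plan is to apply the Shaneson splitting
\[
L_{r+1}({\bb Z}\times G,\omega\oplus\omega) \cong L_{r+1}(G,\omega\oplus\omega)\oplus L_r(G,\omega\oplus\omega),
\]
which is valid here because $\mathrm{Wh}({\bb Z}_2*{\bb Z}_2)$ and the reduced projective class group of ${\bb Z}[{\bb Z}_2*{\bb Z}_2]$ both vanish (as the paper has already noted for all groups under consideration), so no $K$-theoretic correction terms are needed to distinguish decorations. Taking $G={\bb Z}_2*{\bb Z}_2$ places the infinitely generated $L_r(G,\omega\oplus\omega)$ of the previous paragraph as a direct summand of $L_{r+1}({\bb Z}\times G,\omega\oplus\omega)$, producing the asserted infinitely generated subgroups in exactly the same $r\bmod 4$ cases.
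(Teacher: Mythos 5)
Your proposal is correct and follows essentially the same route the paper indicates for this cited theorem: Cappell's splitting realizing $\mathrm{UNil}$ as a direct summand of $L_r({\bb Z}_2*{\bb Z}_2)$, finiteness of the remaining terms, the Connolly--Davis and Banagl--Ranicki $\mathrm{UNil}$ computations (yielding the stated mod-$4$ pattern and its shift under the orientation twist), and Shaneson's splitting for the ${\bb Z}\times G$ assertion. One small imprecision: Cappell's Mayer--Vietoris sequence computes the quotient $L_*/\mathrm{UNil}_*$ rather than having $\mathrm{UNil}$ appear directly as a term in the sequence, but since your argument actually uses only the direct-sum decomposition (which you state correctly), the conclusion is unaffected.
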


The case of $\omega$ trivial and $r = 2$ mod $4$ was due to Cappell \cite{ca1} and expressed there as the geometric statement that there are infinitely many manifolds that are homotopy equivalent to ${\bb R}P^{4k+1}\#{\bb R}P^{4k+1}$, but are not the connected sums of two manifolds homotopy equivalent to ${\bb R}P^{4k+1}$.

The actual result in \cite{cod} is a calculation of the UNil groups of Cappell \cite{ca2} that describe the failure of reduced $L$-theory to be additive for free products (or more generally the Mayer-Vietoris sequences associated to group actions on trees). They subsequently analyzed the complete obstruction to this additivity problem. The groups in the theorem are infinitely generated exactly when the UNil groups are nontrivial. In case the fixed set is discrete, Connolly, Davis and Kahn \cite{codk} further showed that $S(M)$ is a sum of such UNil groups. Moreover, they also showed that $S(M)$ is the same as the isovariant structure set $S^{\text{iso}}(T)$ of exotic involutions on the torus that are isovariantly homotopic to the standard involution. Combining these ingredients with Shaneson's formula for surgery groups after crossing with ${\bb Z}$ \cite{shaneson}, we have a similar description when the fixed set has dimension $1$.

For the present purpose, all we need is that a nontrivial element of $L_{n+1}({\bb Z}^d\times({\bb Z}_2 * {\bb Z}_2), \omega\oplus\omega)$ gives, via split injections ${\bb Z}^d\times({\bb Z}_2 * {\bb Z}_2)\to\pi$, an involution on the torus that is homotopic to the standard involution but not isovariantly homeomorphic to it. We denote the torus with this exotic involution by $T'$. It is important to note that the exotic involution remains PL. By proper surgery, the exotic involutions do not become standard even when the fixed sets are deleted. Thus, there is no need for stratified surgery techniques \cite{weinberger2} for this part of the analysis.

When is $L_{n+1}({\bb Z}^d\times({\bb Z}_2 * {\bb Z}_2), \omega\oplus\omega)$ nontrivial, so that exotic involutions exist? If $d=0$ and $n$ is even, then $\omega$ is trivial, and $L_{n+1}$ is nontrivial only when $n+1=2$ or $3$ mod $4$. Therefore $n=2$ mod $4$. If $d=0$ and $n$ is odd, then $\omega$ is nontrivial, and $L_{n+1}$ is nontrivial only when $n+1=0$ or $1$ mod $4$. Therefore $n=3$ mod $4$. 

\begin{conclusion*}
For $n\ge 4$, there are exotic involutions on $T^n$ with discrete fixed set if and only if $n=2$ or $3$ mod $4$. Similarly, there are exotic involutions on $T^n$ with $1$-dimensional fixed set if and only if $n=0$ or $3$ mod $4$. 
\end{conclusion*}

Relying on \cite{ca1}, Block and Weinberger \cite{bw} made use of these elements, with a simpler analysis in the $d = 0$ case.

\bigskip

\noindent{\bf Step 2}. Construction of counterexamples to Nielsen realization problem.

\bigskip

We largely follow \cite{bw}, but make use of some more surgery to yield examples in all dimensions.  The new difficulties in verifying these examples will be dealt with in the next step.

The torus $T$ with the standard involution is easily the boundary of a PL aspherical manifold $W$ with involution, such that the inclusion $\pi_1T\to \pi_1W$ is injective. We will show that the same is true for the torus $T'$ with exotic involution constructed in the first step.

The construction of $T'$ shows that there is a normal cobordism $Z$ between the standard involution $T$ and the exotic involution $T'$. Moreover, the cobordism is trivial on the fixed part, the involution action is finite and PL, and it can be arranged so that the orbifold fundamental group of $Z/{\mathbb Z}_2$ is also $\pi$. Now we may apply the relative version \cite{djw} of Gromov hyperbolization \cite{chd, dj}, which starts from any simplicial complex and combines ``simplices of non-positively curved manifolds'' to build a polyhedral space that closely resembles the polyhedron \cite{davis2, dj}. So we hyperbolize the quotient $Z/{\bb Z}_2$ relative to the two ends, and get an aspherical ${\bb Z}_2$-cobordism $Z'$ between $T$ and $T'$. Then $W'=Z'\cup_TW$ is a PL aspherical manifold with involution, such that $\pa W'=T'$, and the inclusion $\pi_1T'\to \pi_1W'$ is injective.

Although $\pa W=T$ and $\pa W'=T'$ are not equivariantly homeomorphic, the involutions are homotopic. Any one such homotopy equivalence is homotopic to a homeomorphism. Let $V$ be obtained by glueing $W$ and $W'$ along this homeomorphism. Then $V$ is a closed aspherical manifold of dimension $n+1$. The construction is somewhat reminiscent of the construction in \cite{gp} of non-arithmetic lattices by glueing hyperbolic manifolds with totally geodesic boundary together via isometries of their respective boundaries.

Consider $V$ as $W\cup_T T^n\times[0,1]\cup_{T'} W'$. We have the standard involution on $T^n\times 0$ and the exotic involution on $T^n\times 1$. Since homotopic homeomorphisms of the torus are always pseudoisotopic (see \cite{wall2}, for example), the involutions on both ends of the ribbon $T^n\times[0,1]$ can be extended to a (noninvolutive) homeomorphism of the interior. This gives a homeomorphism $h\colon V\to V$ that satisfies $h^2=id$ on $W$ and $W'$ but is not involutive on $V$. 

Since $h$ is involutive on $W$ and $W'$, we have $h^2\simeq id$ on $V$. However, it is shown in \cite{bw} (for the examples coming from $L_6$, but the proof is no different for all of our examples) that no manifold homotopy equivalent to $V$ has an involution inducing the same outer automorphism as $h_*$ on the fundamental group. In particular, the outer automorphism $h_*$ on the fundamental group of $V$ gives a counterexample to the generalized Nielsen realization problem. We will see in the next step that these remain counterxamples after crossing with any closed hyperbolic manifold.

The closed aspherical manifold with center in its fundamental group that we will ultimately show does not have a circle action is the mapping torus $T(h)$, which has dimension $n+2$. The dimensions  are those $\ge 6$ that are $\neq 3$ mod $4$. To get examples at all dimensions $\ge 8$, we will further consider $T(h)\times H$ for a closed hyperbolic manifold $H$. Note that if the exotic involution is concordant to the standard one, then the extension to the interior of the ribbon can be kept involutive, so that $T(h)$ would have a circle action whose orbits would go through the section $T^n$ (generically) twice. Our aim is to show that $T(h)$ as constructed (and its extension $T(h)\times H$) does not have any circle actions.

\bigskip

\noindent{\bf Step 3}. Verification of counterexamples to Nielsen.

\bigskip

The non-existence of circle actions on $T(h)\times H$ will be reduced to the following version of the counterexample to Nielsen: No homology manifold homotopy equivalent to $V\times H$ has an involution inducing the same outer automorphism on the fundamental group as $h_*$. This is a stronger version than the one proved in \cite{bw}, which only dealt with topological manifolds and did not consider the closed hyperbolic manifold factor.

According to \cite{bfmw}, the surgery theory for topological manifolds used in the argument in \cite{bw} can be adopted for homology manifolds without change, except for perhaps an extra ${\bb Z}$, reflecting, in the normal invariants, Quinn's obstruction to resolution of homology manifolds \cite{quinn2}. In fact, this extra ${\bb Z}$ does not appear here, because a homology manifold homotopy equivalent to $V$ has a Poincar\'e embedded codimension $1$ torus. Therefore, by an argument similar to one in \cite{blw}, the homology manifold must automatically be resolvable, which disposes of this final ${\bb Z}$.

In some more detail, codimension one splitting gives a map from the surgery exact sequence of $V$ to the surgery exact sequence of the codimension one submanifold $T^n\sub V$
\begin{equation*}\begin{CD}
S(V) @>>> H_{n+1}(V;{\bb L}) @>>> L_{n+1}(\pi_1V) \\
@VVV @VVV \\
S(T^n) @>>> H_n(T^n;{\bb L})\otimes {\bb Z}[\frac{1}{2}]
\end{CD}\end{equation*}
The final ${\bb Z}$ is caught by $H_n(T^n;{\bb L})\otimes {\bb Z}[\frac{1}{2}]$ but $S(T^n)$ is trivial.

Notice, though, that all that is used is the vanishing of the homomorphism $S(T^n) \to H_n(T^n;{\bb L})\otimes {\bb Z}[\frac{1}{2}]$, i.e., the Novikov conjecture for the torus. Since the Novikov conjecture is well-known for non-positively curved closed manifolds \cite{fh}, this argument remains valid after crossing with a closed hyperbolic manifold $H$.

Having dealt with the issue of homology manifolds, the remaining obstacle for carrying out the argument in \cite{bw} is that crossing with a hyperbolic manifold may raise the dimension of the fixed sets, so that one does not have a good equivariant homotopy model for the putative group action. In the situation of $T(h)$ without crossing with $H$, the fixed set is low enough dimensional that the local homotopy theory forces the fixed set to be a manifold, and then more standard tools can be applied. When the dimension of the fixed set is higher, the fixed set need not even be ANRs, and have fairly uncontrolled homotopy theory away from $2$. 

Nevertheless, as we observe below, some part of the total surgery obstruction for $T(h)$ does apply a priori to our situation even after crossing with a positive dimensional manifold. To do this, we will begin the development of a theory of surgery obstructions to pseudoequivalence.

\begin{definition*}
A pseudoequivalence $f\colon X\to Y$ between $G$-spaces is an equivariant map which is a homotopy equivalence (upon ignoring the group action). 
\end{definition*}

Let $EG$ be a contractible free $G$-space. The pseudoequivalence simply means that the map $X\times EG\to Y\times EG$ is an equivariant homotopy equivalence. This is also equivalent to that the Borel constructions $(X\times EG)/G\to (Y\times EG)/G$ are homotopy equivalent.

In our situation, we have a $Y$ that is an equivariant Poincar\'e object, and the manifold solution of the Nielsen realization problem would be pseudoequivalent to $Y$. In \cite{rw}, pseudoequivalence invariance of (higher) signature operators is studied. Unfortunately, the analysis there is not adequate for our situation that uses an obstruction at the prime $2$ that does not appear to be an invariant of the any type of signature operator. We shall not use a priori absolute invariants, but have to use relative invariants to capture our obstruction.

\begin{proposition*}
Suppose that $f\colon N\to Y$ is a degree one $G$-map from a manifold to an equivariant Poincar\'e complex that is a pseudoequivalence on the singular sets of these spaces, and that after crossing with $EG$, this map is covered by surgery bundle data. Then it is possible to define a surgery obstruction associated to $f$ that lies in
$L_d^p(\pi_1((Y\times EG)/G))$.
\end{proposition*}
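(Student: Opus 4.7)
The plan is to define the surgery obstruction by passing to the Borel construction and then reducing to a relative manifold surgery problem on the free part, where the pseudoequivalence hypothesis on the singular set guarantees that the boundary of the problem is a homotopy equivalence so that a Wall-type relative obstruction is well-posed.

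First, I would form the Borel constructions $N_G = (N\times EG)/G$ and $Y_G = (Y\times EG)/G$ together with the induced degree one map $f_G\colon N_G \to Y_G$. The surgery bundle data hypothesis supplies the normal data covering $f_G$ fiberwise over $BG$. Since $Y$ is an equivariant Poincar\'e complex and $f$ has degree one, $Y_G$ carries fiberwise Poincar\'e duality of fiber dimension $d=\dim N$.

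Next, let $N^{\text{sing}}\sub N$ and $Y^{\text{sing}}\sub Y$ denote the sets of points with nontrivial isotropy. By the remark recorded earlier in the paper, the pseudoequivalence hypothesis on singular sets upgrades, after Borel construction, to the statement that $N_G^{\text{sing}} \to Y_G^{\text{sing}}$ is an ordinary homotopy equivalence. Choose equivariant regular neighborhoods $U\supset N^{\text{sing}}$ and $V\supset Y^{\text{sing}}$ with $f(U)\sub V$, and, using standard collaring and deformation arguments together with the singular-set pseudoequivalence, arrange that $U_G \to V_G$ is a homotopy equivalence of pairs, so in particular $\pa U_G \to \pa V_G$ is a homotopy equivalence. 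Since $G$ acts freely outside these neighborhoods, the complementary Borel constructions collapse to ordinary quotients: $M = (N\setminus U)/G$ is a compact manifold with boundary, and $P = (Y\setminus V)/G$ is a Poincar\'e pair of dimension $d$. The restriction of $f_G$ then gives a degree one normal map $(M,\pa M) \to (P,\pa P)$ whose boundary is a homotopy equivalence, and Wall's relative surgery theory produces an obstruction in $L_d^p(\pi_1 P)$, which we push forward via the inclusion-induced $\pi_1 P \to \pi_1 Y_G$ to land in $L_d^p(\pi_1((Y\times EG)/G))$ as required.

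The principal obstacle will be verifying independence of all the choices, especially the neighborhoods $U$ and $V$, and the particular homotopy witnessing the pseudoequivalence on the singular set. Two sets of choices yield normally cobordant relative surgery problems; their obstructions agree via a $\pi$--$\pi$ cobordism argument in a neighborhood of the singular set, the required null-cobordism of the boundary data being supplied by the pseudoequivalence hypothesis. A secondary technical issue is the infinite dimensionality of $EG$, which I would address by working with finite-dimensional $G$-CW skeleta $EG^{(n)}$ and passing to the limit, noting that $\pi_1((Y\times EG^{(n)})/G)$, and hence its $L$-group, stabilizes once $n$ is large enough.
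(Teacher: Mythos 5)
Your approach is genuinely different from the paper's. The paper works entirely at the chain level via Ranicki's algebraic theory of surgery: cross $f$ with a highly connected free $G$-manifold (e.g.\ the universal cover of the boundary of a regular neighborhood of a skeleton of $BG$), use the free structure, truncate the resulting chain complex at $\dim Y$, and invoke Wall's finiteness theorem to see the truncation is a finite \emph{projective} chain complex, which lands the quadratic Poincar\'e complex obstruction in $L_d^p(\pi_1 Y_G)$. You instead propose a geometric decomposition: remove equivariant neighborhoods of the singular sets and do Wall-type relative surgery on the free complement.

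There is a genuine gap in the geometric route as you have set it up. You assert that, using the pseudoequivalence on singular sets plus ``standard collaring and deformation arguments,'' one can arrange $U_G\to V_G$ to be a homotopy equivalence of pairs, hence $\partial U_G\to\partial V_G$ a homotopy equivalence. This does not follow. Pseudoequivalence on the singular sets controls $N^{\mathrm{sing}}\to Y^{\mathrm{sing}}$ (after Borel), but the boundary of a regular neighborhood is the link of the singular set, and the link carries normal data that a pseudoequivalence on the singular set does not see. Getting a homotopy equivalence on that boundary is exactly the extra content of an \emph{isovariant} equivalence, which is strictly stronger than pseudoequivalence near the singular set. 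Without it you only have a degree one normal map on the boundary, and Wall's relative surgery then puts you in the relative group $L_d(\pi_1 P,\pi_1\partial P)$, not $L_d(\pi_1 P)$. The paper's post-proposition discussion in fact says precisely this for the ``top pure stratum as a pair'': the obstruction lies in $L_d^p(\pi_1 Y_G)$ and only its image in the relative $L$-group is trivial. A secondary issue is the decoration: Wall's relative surgery with the usual setup produces $L^h$ or $L^s$ obstructions, and you have not explained why $L^p$ appears. In the paper's argument $L^p$ is forced because the truncated chain complex is only finitely dominated (Wall's finiteness obstruction is not a priori zero), and the truncation is precisely what the singular-set pseudoequivalence hypothesis makes available; this is the algebraic role of the hypothesis that your geometric decomposition does not reproduce.
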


\begin{proof}
We observe that the usual definition of the surgery obstruction actually applies to the chain complex of the mapping cylinder of $f$, once one verifies that it is chain equivalent to a finite projective chain complex \cite{ranicki}. This is easy, since the map is clearly a $G$-equivalence after crossing with any free $G$-space, so one can cross with a highly connected free $G$-manifold (e.g. the universal cover of the boundary of a regular neighborhood of a skeleton of $BG$ embedded in Euclidean space), use the free structure on that map, and then truncate at $\dim Y$. 

This truncated chain complex is projective as shown by Wall \cite{wall1}. In general, for nonfree actions, one cannot improve this to a free chain complex as examples of \cite{quinn1} or \cite{weinberger1} show.
\end{proof}

\begin{remark}
This proposition is in part motivated by an elegant paper of Dovermann \cite{dovermann}, where he observes that in a particular pseudoequivalence problem, the appropriate home for bundle data is the Borel construction.
\end{remark}

\begin{remark}
A variant of this proposition, without bundle data, can be formulated using visible $L$-theory \cite{weiss}.  Even in this setting, it is necessary to work with maps that are pseudoequivalences on the singular sets, because without that condition, there is no way to define the relevant truncation.

Because of Weiss' fundamental result relating assembly maps in visible and quadratic $L$-theories, the subsequent arguments can be adapted to the visible setting. 
\end{remark}

\begin{remark}
Since the surgery obstruction that we have obtained lies in $L^p$, we cannot always complete surgery when this obstruction vanishes. Its vanishing only assures that the surgery can be completed after crossing with ${\bb R}$. We hope to return to the geometry of surgery up to pseudoequivalence in the topological category and its applications to transformation groups in a future paper. For our current purpose, the previous proposition defining the obstruction suffices.
\end{remark}

In light of the proposition, if a stratified Poincar\'e complex is pseudoequivalent to a $G$-manifold, one sees that the top pure stratum, thought of as a pair, has a degree one normal map. The surgery obstruction of this lies in $L_d^p(\pi_1((Y\times EG)/G))$, and its image in $L_d^p(\pi_1((Y\times EG)/G),\pi_1((\Sigma_Y\times EG)/G))$ is trivial ($\Sigma_Y$ denoting the singular set of $Y$), because one obtains a homotopy equivalence after glueing in the neighborhood of $\Sigma_N$.

In our situation, however, the UNil obstruction survives the removal of the singular set. Similar to the argument in \cite{bw}, this is seen by considering the boundary map in the exact sequence of an amalgamated free product that calculates $\pi_1V$ and then mapping further to the pair $({\bb Z}_2*{\bb Z}_2, {\bb Z}_2\sqcup {\bb Z}_2)$.

Now, if we cross with a hyperbolic manifold, then the Novikov conjecture with coefficients in the ring ${\bb Z}[\pi_1V\rtimes {\bb Z}_2]$ implies that our UNil obstruction still survives. The reason is that the obstruction survives upon crossing with ${\bb R}^n$, as an element in the bounded $L$-theory over ${\bb R}^n$ \cite{fp}. Thus we can take the transfer to the universal cover ${\bb R}^n$ of the hyperbolic manifold, followed by the inverse of the exponential map. Actually, this application of bounded $L$-theory is behind the descent proofs of the Novikov conjecture with coefficients; see \cite{cp, fw1, fw2}, for examples.

\bigskip

\noindent{\bf Step 4}. Double cover of nontrivial circle action on $T(h)$.

\bigskip

Since $V$ is aspherical, its fundamental group is torsion free. As observed in \cite{bw}, the fundamental group is also centerless. This is because it is constructed via an amalgamated free product along a subgroup containing no central elements. Indeed, the fundamental group of a hyperbolized cobordism generally contains ${\bb Z}*\pi_1\pa$. 

The fundamental group of the mapping torus $T(h)$ is the HNN-extension
\[
\pi_1(T(h))=\pi_1V\rtimes_{h_*} \langle t\rangle,\quad tat^{-1}=h_*(a).
\]
By $h^2\simeq id$, we know $t^2$ is a central element of $\pi_1(T(h))$. The two fold cover of $T(h)$ corresponding to the subgroup $\pi_1V\times \langle t^2\rangle$ is $T(h^2)$. Moreover, $h^2\simeq id$ implies that $T(h^2)\simeq V\times S^1$. 

Suppose $T(h)$ has a nontrivial circle action. Let
\[
G=\{g\colon T(h^2)\to T(h^2)\colon g\text{ covers the action of some circle element on }T(h)\}
\]
be the group of covering actions. Then we have an exact sequence $1\to {\bb Z}_2\to G\to S^1$.

Since $T(h)$ is a closed aspherical manifold, according to Borel the orbit of any point gives an injective map $\pi_1S^1\to \pi_1(T(h))$, and the image of the map lies in the center of $\pi_1(T(h))$. 

Let $at^k\in \pi_1V\rtimes_{h_*} \langle t\rangle$ be a central element. We have $at^k\cdot b=b\cdot at^k$ for any $b\in \pi_1V$. This means that $h_*^k(b)=a^{-1}ba$. Since $h_*$ is not a conjugation and has prime order $2$, we see that $k$ is even. Then $h_*^k=id$, and $a$ is in the center of $\pi_1V$. Since $\pi_1V$ is centerless, we get $a=1$ and conclude that the center of $\pi_1V\rtimes_{h_*} \langle t\rangle$ is infinite cyclic and generated by $t^2$.

Now the image of the map $\pi_1S^1\to \pi_1(T(h))$ lies in the image of the two fold cover $\pi_1(T(h^2))$. Therefore the circle action lifts to a circle action on $T(h^2)$. This lifting is a splitting to the homomorphism $G\to S^1$ and is compatible with the actions on $T(h)$ and $T(h^2)$. This implies that $G\cong {\bb Z}_2\times S^1$. Since the splitting to the projection $G\to S^1$ is unique, the decomposition $G\cong {\bb Z}_2\times S^1$ is unique.

\bigskip

\noindent{\bf Step 5}. Nontrivial circle action on $T(h^2)$.

\bigskip

The hypothetical action (at the end, the action does not exist) of $G\cong {\bb Z}_2\times S^1$ on $T(h^2)$ induces an action of ${\bb Z}_2$ on the quotient $T(h^2)/S^1$. Although the circle action on $T(h^2)$ may not be unique, we will argue that the composition $V\sub T(h^2)\to T(h^2)/S^1$ is a homotopy equivalence. For this purpose, all we need is that $X=T(h^2)$ is a closed aspherical manifold with fundamental group $\pi\times {\bb Z}$ with centerless $\pi$, and $V\sub X$ is an aspherical submanifold with fundamental group $\pi$.

Since $X$ is a closed aspherical manifold, according to Borel the induced map $\pi_1S^1\to \pi_1X$ is injective and maps into the center ${\bb Z}$ of $\pi_1X=\pi\times {\bb Z}$. By \cite[Lemma 1]{cr4} (also see \cite[Lemma 11.7.2]{lr1}), the quotient ${\bb Z}/\text{image}(\pi_1S^1)$ contains no element of finite order. Therefore $\pi_1S^1$ maps isomorphically onto the subgroup ${\bb Z}$ in $\pi_1X$.

Now we claim that $S^1$ acts freely on $X$. The reason is that if a point $x\in X$ is fixed by ${\bb Z}_k\sub S^1$, then the map $\pi_1S^1\to \pi_1X$ is a composition
\[
\pi_1S^1\to \pi_1(S^1/{\bb Z}_k)\to \pi_1(S^1x)\to \pi_1X.
\]
Since the first map ${\bb Z}\to {\bb Z}$ is multiplication by $k$, the image of $\pi_1S^1$ lies in $k{\bb Z}$ and cannot be the whole subgroup ${\bb Z}$ in $\pi_1X$.

Once we know the action is free, we have a bundle
\[
S^1\to X\to X/S^1.
\]
Then the long exact sequence of homotopy groups implies that 
\[
X/S^1\simeq K(\pi_1X/\text{center}(\pi_1X),1).
\]
Moreover, the composition $V\sub X\to X/S^1$ induces an isomorphism on the fundamental group. Since $V$ is aspherical, this implies that the composition is a homotopy equivalence.

\begin{remark*}
A consequence of the discussion is the homotopy uniqueness of the quotient of the circle action. Since the quotient need not be a manifold (only a homology manifold) and may allow variance by $h$-cobordism, there are many isomorphism classes of  circle actions. The issue of homology manifold can be dealt with by using the resolution theory for homology manifolds to study the $1$-parameter families of group actions. The issue of $h$-cobordism disappears if the Whitehead group of $\pi$ is trivial (which is a very weak part of the Borel conjecture). The quotient is, at least in high dimensions, unique up to these two issues.
\end{remark*}

\bigskip

\noindent{\bf Step 6}. $T(h)\times H$ has no nontrivial circle action.

\bigskip

For an effective $S^1$ action on $T(h)$, Step 4 tells us that it lifts to a free action on the double cover $T(h^2)$ and Step 5 tells us that the composition $V\sub T(h^2)\to T(h^2)/S^1$ is a homotopy equivalence.

The double cover induces a covering involution that takes a fibre $V\sub T(h^2)$ to the fibre $V\sub T(h^2)$ half a circle away via $h$. Step 4 also tells us that this involution commutes with the lifted circle action, and therefore induces a covering involution on the quotient $T(h^2)/S^1$. Then by tracing the projection map, we find that the homotopy equivalence $V\simeq T(h^2)/S^1$ takes the homotopy class of the involution $h$ on $V$ to the homotopy class of the covering involution on $T(h^2)/S^1$.

Borel showed that a finite group of homeomorphisms of a closed aspherical manifold $M$ is represented faithfully in $Out(\pi_1M)$. Although the fact is proved in \cite{cr3} for manifolds, the proof is homological in nature and applies, with no change, to homology manifolds. Now $T(h^2)/S^1$ is a (possibly homological) manifold homotopy equivalent to $V$. Moreover, $T(h^2)/S^1$ has an involution that induces the same automorphism on $\pi_1V$ as $h_*$, which is not an inner automorphism. In Step 3, we showed that this is impossible. 

Finally, we note that the discussion in Steps 4 and 5 is still valid if we multiply with a closed aspherical manifold $H$ with centerless fundamental group, such that the universal cover of $H$ has a degree one Lipschitz map to ${\bb R}^n$ (any hyperbolic manifold, for example). So $T(h)\times H$ also has no nontrivial circle action.

\end{document}